\documentclass[12pt]{article}

\usepackage{amsmath}
\usepackage{amssymb,amsfonts,amsthm}
\usepackage{tikz,color,pgf,graphicx,url}
\usepackage{enumerate}

\usepackage[utf8]{inputenc}
\usepackage[english]{babel}
\usepackage{graphicx}

\newtheorem{theorem}{Theorem}[section]

\newtheorem{claim}[theorem]{Claim}

\newtheorem{definition}[theorem]{Definition}

\newtheorem{example}[theorem]{Example}


\DeclareMathOperator{\crit}{cr}
\renewcommand{\S}{\mathcal{S}}
\newcommand{\T}{\mathcal{T}}
\DeclareMathOperator{\sig}{sig}

\usepackage{amssymb}
\newcommand*{\myproofname}{Proof}
\newenvironment{myproof}[1][\myproofname]{\begin{proof}[#1]}{\end{proof}}

\textwidth15cm
\textheight20cm
\oddsidemargin 0.4cm
\evensidemargin 0.4cm
\voffset-1cm

\begin{document}

\title{Domination and independence number of large $2$-crossing-critical graphs}

\author{Vesna Ir\v si\v c$^{a,b,c}$\thanks{Email: \texttt{vesna\_irsic@sfu.ca}}
\and Maru\v sa Lek\v se$^{a}$\thanks{Email: \texttt{marusa.lekse@student.fmf.uni-lj.si}}
\and Mihael Pa\v cnik$^{a}$\thanks{Email: \texttt{mihael.pacnik@student.fmf.uni-lj.si}}
\and Petra Podlogar$^{a}$\thanks{Email: \texttt{petra.podlogar@student.fmf.uni-lj.si}}
\and Martin Pra\v cek$^{a}$\thanks{Email: \texttt{martin.pracek@student.fmf.uni-lj.si}}
}
\maketitle

\begin{center}
$^a$ Faculty of Mathematics and Physics, University of Ljubljana, Slovenia\\
\medskip

$^b$ Institute of Mathematics, Physics and Mechanics, Ljubljana, Slovenia\\
\medskip

$^c$ Department of Mathematics, Simon Fraser University, Burnaby, BC, Canada\\
\medskip
\end{center}

\begin{abstract}
After $2$-crossing-critical graphs were characterized in 2016, their most general subfamily, large $3$-connected $2$-crossing-critical graphs, has attracted separate attention. This paper presents sharp upper and lower bounds for their domination and independence number.
\end{abstract}

\noindent
{\bf Keywords:} crossing-critical graphs, domination number, independence number.

\noindent
{\bf AMS Subj.\ Class.\ (2020)}: 05C10, 05C62, 05C69.

\section{Introduction}

The \emph{crossing number} $\crit(G)$ of a graph $G$ is the smallest number of edge crossings in a drawing of $G$ in a plane. The topic has been widely studied, see for example~\cite{chimani+2022, clancy+2020, shahrokhi+1995, silva+2019, szekely2008}. A graph $G$ is \emph{$k$-crossing-critical} if $\crit(G) \geq k$, but every proper subgraph $H$ of $G$ has $\crit(H) < k$. Note that subdividing an edge or its inverse operation does not affect the crossing number of a graph. Thus we can restrict our studies to graphs without degree $2$ vertices. Under this restriction, Kuratowski's Theorem tells us that the only $1$-crossing-critical graphs are $K_5$ and $K_{3,3}$. The classification of $2$-crossing-critical graphs has been of interest since the 1980s. Partial results on the topic have been reported in~\cite{bloom+1983, ding+2011, kochol1987, richter1988, siran1984}, and some related results can be found in~\cite{beaudou+2013, hlineny2003, leanos+2008}. Crossing numbers of graphs with a tile structure have been studied in~\cite{pinontoan+2004, pinontoan+2003}. Finally, Bokal, Oporewski, Richter, and Salazar~\cite{ccc} provided an almost complete characterization of $2$-crossing-critical graphs. In particular, they describe a tile structure of large $3$-connected $2$-crossing-critical graphs (i.e., all but finitely many $3$-connected $2$-crossing-critical graphs). Recently, the degree properties of crossing-critical graphs have been studied in~\cite{bokal+2019b, bokal+2019, hlineny+2019}.

The above-mentioned large $3$-connected $2$-crossing-critical graphs have since attracted separate attention, see~\cite{bokal+2021+, bokal+2021, ham}. In~\cite{bokal+2021, ham}, the Hamiltonicity of these 
graphs is discussed, and the number of all Hamiltonian cycles is determined. In~\cite{bokal+2021+}, several additional properties of large $3$-connected $2$-crossing-critical graphs have been studied. In particular, the number of vertices and edges can be determined from the signature of a graph, and several results regarding their chromatic number, chromatic index, and tree-width are presented. In the present paper, we extend the studies of large $3$-connected $2$-crossing-critical graphs to their domination and independence number.

The rest of the paper is organized as follows. In the next section, necessary definitions and known results are listed. In Section~\ref{sec:domination}, the sharp upper and lower bounds for the domination number of large $3$-connected $2$-crossing-critical graphs are given, while in Section~\ref{sec:independence} analogous results are proved for their independence number.

\section{Preliminaries}
\label{sec:preliminaries}

Let $G$ be a graph. Its vertex set is denoted by $V(G)$ and its edge set by $E(G)$. The \emph{(open) neighborhood} of a vertex $v \in V(G)$ is $N(v) = \{ u \in V(G); \; uv \in E(G) \}$ and the \emph{closed neighborhood} of $v$ is $N[v] = \{v\} \cup N(v)$. Similarly, for $D \subseteq V(G)$, $N[D] = \bigcup_{v \in D} N[v]$ is the closed neighborhood of a subset of vertices $D$. Note also that $[n] = \{1, \ldots, n\}$ and that a reversed sequence of a sequence $a$ is denoted by $\overline{a}$.

We now recall the definitions of the domination number and independence number. 

\begin{definition}
Let $G$ be a graph. A subset $D\subseteq V(G)$ \emph{dominates} the set of vertices $X \subseteq V(G)$ if $X \subseteq N[D]$. If $X = V(G)$, then we say that $D$ \emph{dominates} the graph $G$. The size of the smallest dominating set is called the \emph{domination number} of the graph $G$ and it is denoted by $\gamma(G)$.
\end{definition}

\begin{definition}
Let $G$ be a graph. A subset $X\subseteq V(G)$ is \emph{independent} if none of the vertices from $X$ are adjacent. The \emph{independence number} $\alpha(G)$ of the graph $G$ is the size of the largest independent set.
\end{definition}

In the rest of the section, we recall the characterization of $2$-crossing-critical graphs and provide the necessary definitions which help us describe large $3$-connected $2$-crossing-critical graphs, i.e., graphs studied in this paper. Note that vertices of degrees $1$ and $2$ do not affect the crossing number, thus the assumption that the minimum degree is at least $3$ is reasonable.

\begin{theorem}[\cite{ccc}]
\label{thm:characterization}
Let $G$ be a $2$-crossing-critical graph with a minimum degree of at least $3$. Then one of the following holds.
\begin{enumerate}[(i)]
    \item $G$ is $3$-connected, contains a subdivision of $V_{10}$, and has a very particular twisted M\"obius band tile structure, with each tile isomorphic to one of $42$ possibilities. 
    \item $G$ is $3$-connected, does not have a subdivision of $V_{10}$, and has at most $3$ million vertices.
    \item $G$ is not $3$-connected and is one of $49$ particular examples.
    \item $G$ is $2$- but not $3$-connected and is obtained from a $3$-connected $2$-crossing-critical graph by replacing digons with digonal paths. 
\end{enumerate}
\end{theorem}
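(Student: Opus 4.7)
The plan is to carry out a structural analysis organized by the connectivity of $G$. Since $G$ has minimum degree at least $3$, degree-$2$ artefacts are already eliminated, but I still need to split the $2$-connected-but-not-$3$-connected case from the $3$-connected one. First I would handle case (iv) via a reduction: if $G$ has a $2$-cut $\{u,v\}$, I would split $G$ along $\{u,v\}$ and use $2$-crossing-criticality on each side to show that at least one side is effectively a digon path (a path whose internal vertices come in pairs forming multiedges). Iterating, every such $G$ is obtained from a $3$-connected $2$-crossing-critical graph by replacing digons with digonal paths, reducing everything to the $3$-connected classification.

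For the $3$-connected cases, I would dichotomize on whether $G$ contains a subdivision of $V_{10}$. For case (ii), when no such subdivision exists, I would use the fact that excluding $V_{10}$ as a topological minor from a $3$-connected graph forces bounded pathwidth / ``near-planarity''; combined with the well-known fact that $k$-crossing-critical families of bounded pathwidth are finite (indeed quantitatively so), this yields a finite bound on $|V(G)|$, with the explicit $3$ million coming from a careful quantitative version of the argument. For case (i), the main structural content, I would start from the fixed $V_{10}$-subdivision and argue that $2$-crossing-criticality, together with $3$-connectivity, forces a cyclic decomposition of $G$ into \emph{tiles}: small plane pieces with designated left and right boundary paths that behave additively under concatenation with respect to crossing number. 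The $V_{10}$-subdivision provides a ``spine'' around which $G$ wraps in a twisted Möbius-band fashion, and the tile structure is the combinatorial shadow of that embedding.

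The classification of tiles is then a finite but delicate enumeration: each tile is a small plane graph whose boundary behaviour is tightly constrained by the requirement that every edge be critical for crossing number $2$, and a long case-analysis distinguishing how crossings can be forced across boundaries would cut the list down to exactly $42$ possibilities. Case (iii) -- the $49$ sporadic non-$3$-connected examples -- would need a separate, exhaustive search over the ways in which $V_{10}$-freeness and failure of $3$-connectivity can coexist with criticality. The main obstacle, by a wide margin, is the tile enumeration in case (i): both producing the finite list of $42$ tiles and proving completeness of the list require a technically heavy analysis controlling how crossings accumulate across tile boundaries and how much ``twist'' the Möbius structure can absorb. This is precisely why the full proof occupies the monograph-length paper~\cite{ccc}, and for our purposes Theorem~\ref{thm:characterization} is used purely as a black box.
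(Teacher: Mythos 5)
The paper does not prove this statement: it is quoted verbatim, with a citation, from the monograph-length classification of Bokal, Oporowski, Richter, and Salazar~\cite{ccc}, and everything downstream uses it strictly as a black box. Your proposal, by your own closing admission, is likewise not a proof but a roadmap whose every substantive step (the splitting along $2$-cuts, the quantitative bound of $3$ million, the enumeration of the $42$ tiles and the $49$ sporadic examples) is deferred to~\cite{ccc}. So at the level of what is actually established, you and the paper are doing the same thing, and citing~\cite{ccc} is the right call; there is no in-paper argument to compare your sketch against.

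One point in your roadmap is, however, false as a mathematical claim and would sink the sketch if taken literally: you assert that ``$k$-crossing-critical families of bounded pathwidth are finite.'' They are not --- Hlin\v{e}n\'y's theorem~\cite{hlineny2003} says that \emph{all} $k$-crossing-critical graphs (for fixed $k$) have bounded pathwidth, and the graphs of part (i) themselves form infinite $2$-crossing-critical families of bounded pathwidth. The finiteness in part (ii) cannot come from bounded pathwidth alone; in~\cite{ccc} it comes from showing that every sufficiently large $3$-connected $2$-crossing-critical graph necessarily contains a subdivision of $V_{10}$, so it is the $V_{10}$-freeness hypothesis that caps the number of vertices. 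If you intend the sketch to reflect the actual structure of the proof in~\cite{ccc}, that step needs to be restated accordingly.
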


In the present paper, we study graphs from~$(i)$, i.e., $3$-connected $2$-crossing-critical graphs that contain a subdivision of $V_{10}$. Since $3$-connected $2$-crossing-critical graphs that do not contain a subdivision of $V_{10}$ have at most $3$ million vertices, we may call graphs from Theorem~\ref{thm:characterization}$(i)$ \emph{large $3$-connected $2$-crossing-critical graphs} or \emph{large $3$-con $2$-cc graphs} for short. This abbreviation is used throughout the paper. Note that it would also be interesting to study other subclasses of graphs, especially graphs from $(iv)$. However, like in~\cite{bokal+2021+}, we restrict our studies to graphs from $(i)$.

To understand the tile structure of large $3$-con $2$-cc graphs, we need the following definitions.

\begin{definition}
\label{def:tiles}
\begin{enumerate}
    \item A \emph{tile} is a triplet $T = (G, \lambda, \rho)$, where $G$ is a graph and $\lambda, \rho$ are sequences of distinct vertices of $G$, where no vertex of $G$ appears in both $\lambda$ and $\rho$. 
    
    \item A \emph{tile drawing} is a drawing $D$ of $G$ in the unit square $[0,1] \times [0,1]$ for which the intersection of the boundary of the square with $D$ contains precisely the images of the \emph{left wall} $\lambda$ and the \emph{right wall} $\rho$, and these are drawn in $\{0\} \times [0,1]$ and $\{1\} \times [0,1]$, respectively, such that the $y$-coordinates of the vertices are increasing with respect to their orders in the sequences $\lambda$ and $\rho$. 
    
    \item The tiles $T = (G, \lambda, \rho)$ and $T'=(G', \lambda', \rho')$ are \emph{compatible} if $|\rho| = |\lambda'|$.
    
    \item A sequence $(T_0, \ldots, T_m)$ of tiles is \emph{compatible} if $T_{i-1}$ is compatible with $T_i$ for every $i \in [m]$.
    
    \item The \emph{join} of compatible tiles $(G, \lambda, \rho)$ and $(G', \lambda', \rho')$ is the tile $(G, \lambda, \rho) \otimes (G', \lambda', \rho')$ whose graph is obtained from $G$ and $G'$ by identifying the sequence $\rho$ term by term with the sequence $\lambda'$. The left wall of the obtained tile is $\lambda$ and the right wall is $\rho'$.
    
    \item The \emph{join} $\otimes \mathcal{T}$ of a compatible sequence $\mathcal{T} = (T_0, \ldots, T_m)$ of tiles is defined as $T_0 \otimes \cdots \otimes T_m$.
    
    \item A tile $T$ is \emph{cyclically-compatible} if $T$ is compatible with itself. For a cyclically-compatible tile $T$, the \emph{cyclization} of $T$ is the graph $\circ T$ obtained by identifying the respective vertices of the left wall with the right wall. Cyclization of a cyclically-compatible sequence of tiles is $\circ \mathcal{T} = \circ (\otimes \mathcal{T})$. 
    
    \item Let $T = (G, \lambda, \rho)$ be a tile. The \emph{right-inverted} tile of $T$ is $T^{\updownarrow} = (G, \lambda, \overline{\rho})$. The \emph{left-inverted} tile of $T$ is $^{\updownarrow} T = (G, \overline{\lambda}, \rho)$. The \emph{inverted} tile is $^{\updownarrow} T^{\updownarrow} = (G, \overline{\lambda}, \overline{\rho})$. The \emph{reversed} tile is $T^{\leftrightarrow} = (G, \rho, \lambda)$. 
    
\end{enumerate}
\end{definition}

Note that $\otimes \mathcal{T}$ in Definition~\ref{def:tiles},~6. is well-defined since $\otimes$ is associative.

\begin{definition}
\label{def:pictures&frames}
The set $\S$ of tiles consists of tiles obtained as a combination of two \emph{frames} shown in Figure~\ref{fig:frames} and $13$ \emph{pictures} shown in Figure~\ref{fig:pictures} in such a way that a picture is inserted into a frame by identifying the two geometric squares. (This can mean subdividing the frame's square.) A given picture can be inserted into a frame either with the given orientation or with a $180^\circ$ rotation.
\end{definition}

\begin{figure}[h]
    \centering
    \includegraphics[width=0.6\textwidth]{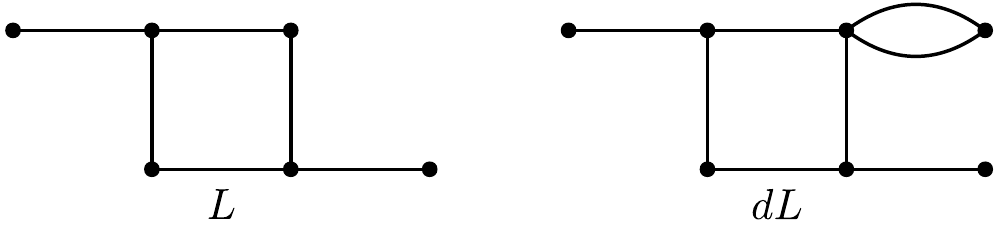}
    \caption{Both possible frames.}
    \label{fig:frames}
\end{figure}

\begin{figure}[h]
    \centering
    \includegraphics[width=0.76\textwidth]{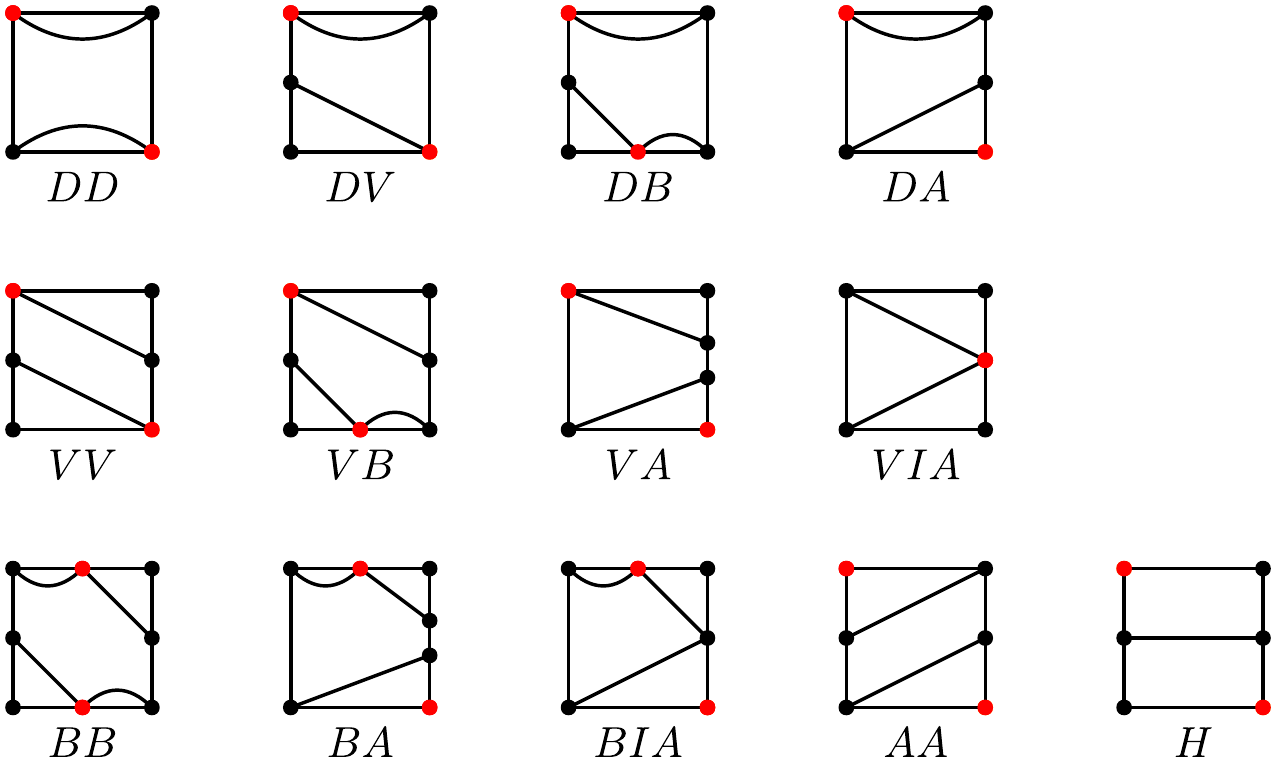}
    \caption{All possible pictures. For later need, the red vertices mark a dominating set of each of them.}
    \label{fig:pictures}
\end{figure}

Note that each picture yields either two or four tiles in $\S$. Altogether the set $\S$ contains $42$ different tiles. For example, in Figure~\ref{fig:tiles} we see that picture $VIA$ yields four different tiles.

\begin{figure}[h]
    \centering
    \includegraphics[width=0.6\textwidth]{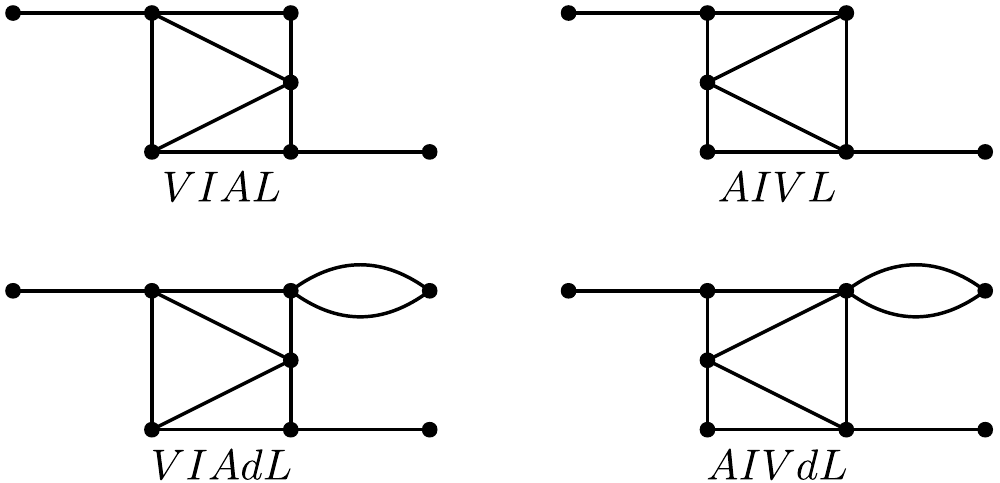}
    \caption{All possible tiles that can be obtained from picture $VIA$.}
    \label{fig:tiles}
\end{figure}

We can now define the tile structure of graphs that are of our interest. Their definition first appeared in~\cite{ccc}. 

\begin{definition}
\label{def:graphs}
The set $\mathcal{T}(\mathcal{S})$ consists of all graphs of the form $\circ((\otimes \mathcal{T})^{\updownarrow})$, where $\mathcal{T}$ is a sequence $(T_0, ^{\updownarrow} T_1^{\updownarrow}, T_2, \ldots, ^{\updownarrow} T_{2m-1}^{\updownarrow}, T_{2m})$, where $m \geq 1$ and $T_i \in \mathcal{S}$ for every $i \in \{0, \ldots, 2m\}$. The obtained vertices of degree $2$ are suppressed. 
\end{definition}


Note that for the case of calculating the domination and independence number of graphs, doubled edges can be replaced with single ones without changing the invariant.

\begin{theorem}(\cite[Theorems 2.18 and 2.19]{ccc})
\label{thm:characterizaton-tiles}
Each graph from $\T(\S)$ is $3$-connected and $2$-crossing-critical. Moreover, all but finitely many $3$-connected $2$-crossing-critical graphs are contained in $\T(\S)$. 
\end{theorem}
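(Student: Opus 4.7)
The statement has two directions and I would handle them separately. For the first part (sufficiency: every graph in $\T(\S)$ is $3$-connected and $2$-crossing-critical), the plan is to track the three properties through the construction of Definition~\ref{def:graphs}. For $3$-connectedness, I would argue locally and then globally: each of the $42$ tiles in $\S$ is internally $3$-connected between its two walls, the join $\otimes$ along three-vertex walls preserves this, and cyclization with the right-inverting twist $\updownarrow$ likewise preserves it, since any hypothetical $2$-vertex cut either lives inside a single tile (ruled out by inspection of the pictures and frames) or separates a union of whole tiles (ruled out because each wall has three vertices). For the crossing number, I would first exhibit an explicit drawing on a twisted Möbius band whose flattening puts each tile into its planar tile-drawing and concentrates all crossings at the closing twist, giving $\crit \le 2$. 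The matching lower bound comes from the fact that each graph of $\T(\S)$ contains a subdivision of $V_{10}$ (visible in the ladder-like backbone running through the sequence $\mathcal{T}$), so $\crit \ge \crit(V_{10}) = 2$. Finally, for criticality, I would show edge by edge that removing any $e$ frees up enough space either within a tile or across the twist to reroute one of the two crossings, producing a drawing of $G-e$ with at most one crossing.

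For the second part (necessity: all but finitely many $3$-connected $2$-crossing-critical graphs lie in $\T(\S)$), the plan is to start from an arbitrary such $G$ that falls into case $(i)$ of Theorem~\ref{thm:characterization}, so $G$ already contains a subdivision $H$ of $V_{10}$. I would use $H$ to locate a cyclic ladder-like backbone in $G$, then iteratively produce a cyclic sequence of $3$-separations around this backbone; the pieces between consecutive separators become the candidate tiles. The $2$-crossing-criticality of $G$ together with the minimum degree condition forces very tight local crossing-number constraints on every piece. A finite case analysis then shows these constraints cut the possibilities down to exactly the $13$ pictures, inserted into the $2$ frames, and with the allowed orientations and wall orderings recorded by the $\updownarrow$ operators — giving the claimed tile set of size $42$ and the alternating pattern $(T_0,\,{}^{\updownarrow}T_1^{\updownarrow},\,T_2,\ldots)$ of Definition~\ref{def:graphs}.

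The main obstacle is clearly the classification step in the second direction: showing that every admissible local piece collapses to one of the $42$ tiles of $\S$ requires enumerating all ways a $3$-connected $2$-crossing-critical fragment can attach to three-vertex walls while remaining consistent with the global crossing count, and then cross-checking these against the explicit pictures and frames. By contrast, the first direction, once the Möbius drawing and the embedded $V_{10}$ are in place, is essentially bookkeeping through the operations $\otimes$, $\circ$, and $\updownarrow$. Since the heavy combinatorial and topological work appears in full in~\cite{ccc}, I would, in a paper such as the present one, simply cite Theorems~$2.18$ and~$2.19$ of that reference rather than reproduce the argument.
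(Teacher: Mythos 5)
The paper offers no proof of this statement: it is an external result quoted directly from Theorems~2.18 and~2.19 of~\cite{ccc}, and your closing remark that one should simply cite that reference is exactly what the authors do, so your proposal matches the paper's approach. One factual slip in your preliminary sketch is worth flagging: the walls of the tiles in $\S$ are two-vertex sequences, not three-vertex ones (these are ``2-tiled'' graphs; see the identification of the single pair $a$, $b$ in Figure~\ref{fig:G1}), so your argument that a $2$-cut cannot separate whole tiles ``because each wall has three vertices'' is based on a false premise --- the actual reason is the cyclic arrangement of the tiles, which leaves a path around the other side of any single wall.
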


Theorem~\ref{thm:characterizaton-tiles} gives a nice representation of large $3$-con $2$-cc graphs, i.e., graphs from Theorem~\ref{thm:characterization} $(i)$.

Graphs from the set $\T(\S)$ can be described as sequences over the alphabet $\Sigma = \{L, d, A, B, D, H, I, V\}$ (see~\cite{ham}). A \emph{signature} of a tile $T$ is $$\sig(T) = P_t \, Id \, P_b \, Fr,$$ where $P_t \in \{A, B, D, H, V\}$ describes the top path of the picture, $Id \in \{I,\emptyset\}$ indicates a possible identifier of the picture, $P_b \in \{ A, B, D, V, \emptyset \}$ describes the bottom path of the picture, and $Fr \in \{ L, dL \}$ describes the frame. Here, $\emptyset$ labels the empty word. See Figure~\ref{fig:frames} for possible signatures of frames ($Fr$), Figure~\ref{fig:pictures} for all possible signatures of pictures ($P_t \, Id \, P_b$), and Figure~\ref{fig:tiles} for an additional example of how to describe a tile with its signature.

For a graph $G \in \T(\S)$, $G = \circ((\otimes \mathcal{T})^{\updownarrow}) = (T_0, ^{\updownarrow} T_1^{\updownarrow}, T_2, \ldots, ^{\updownarrow} T_{2m-1}^{\updownarrow}, T_{2m})$, a signature is defined as $$\sig(G) = \sig(T_0) \sig(T_1) \cdots \sig(T_{2m}).$$ Additionally, $\# X$ denotes the number of occurrences of $X$ in $\sig(G)$, where $X \in \Sigma$. Given a tile $T$, the join of a sequence of $k$ tiles, starting with $T$ and then alternating between $T$ and $T^{\updownarrow}$, is denoted by $k \cdot T$.

\section{Domination number}
\label{sec:domination}

In this section, we present the upper and lower bound for the domination number of large $3$-con $2$-cc graphs, including equality cases for both bounds.

\subsection{Upper bound}
\label{sec:dom-upper}

\begin{theorem}
If $G$ is a large $3$-con $2$-cc graph, then
$$\gamma(G) \leq \#A + \#B + \#D + \#V + 2 \cdot \#H - \#AIV - \#VIA.$$
\label{thm:dom-upper}
\end{theorem}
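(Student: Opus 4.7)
The plan is to construct an explicit dominating set $D$ whose size equals the right-hand side, and then verify that $D$ indeed dominates $G$.

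By Definition~\ref{def:graphs}, $G = \circ((\otimes \mathcal{T})^{\updownarrow})$, where $\mathcal{T} = (T_0, {}^{\updownarrow} T_1^{\updownarrow}, T_2, \ldots, T_{2m})$ and each $T_i$ is a tile from $\mathcal{S}$ arising by inserting one of the $13$ pictures of Figure~\ref{fig:pictures} into one of the two frames of Figure~\ref{fig:frames}. I would first define $D$ as the union, over all tiles $T_i$ in the defining sequence, of the red vertices marked in the underlying picture of $T_i$ (with the corresponding orientation and inversion inherited from the definition). Next, I would check picture-by-picture that the number of red vertices is exactly what the formula prescribes: two for the picture labelled $H$, one for each of the pictures $AIV$ and $VIA$, and, for every other picture, one per letter of $\{A,B,D,V\}$ occurring in its signature. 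Summing these contributions tile-by-tile yields $|D| = \#A + \#B + \#D + \#V + 2\,\#H - \#AIV - \#VIA$.

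It then remains to verify that $D$ is a dominating set of $G$. Within each tile, the red vertices are chosen precisely so as to dominate every internal vertex of the picture; this is the content of the picture-by-picture inspection and should follow directly from Figure~\ref{fig:pictures}. The only vertices not covered by this internal argument are the wall vertices, which are identified in pairs when consecutive tiles are joined and when the final cyclization is taken. For each such shared wall vertex it is enough to observe that it is adjacent to a red vertex in at least one of the two pictures meeting at that wall; since each picture is drawn so that its wall vertices lie on the endpoints of the $A$-, $B$-, $D$-, $V$-, or $H$-paths marked with red vertices, this reduces to a finite check across the $42$ tiles of $\mathcal{S}$ in their possible orientations.

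The main obstacle is precisely this boundary case analysis, complicated by the fact that Definition~\ref{def:graphs} alternates $T_i$ with ${}^{\updownarrow} T_{i+1}^{\updownarrow}$ and then applies the outer right-inversion $\updownarrow$, so each picture may appear in any of its four orientations. I expect to organize the verification by frame type and by wall position, using the observation that in each picture every wall vertex is already adjacent to a red vertex within the same tile, so that the join operation never creates an undominated vertex. Finally, the savings $-\#AIV-\#VIA$ reflect a special feature of these two pictures: the $I$-connector between the $A$-path and the $V$-path produces a single vertex whose closed neighborhood covers both paths, so one red vertex suffices where a naive bound would require two.
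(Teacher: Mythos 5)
Your proposal is correct and follows essentially the same route as the paper: take the red vertices of Figure~\ref{fig:pictures} in each picture as the dominating set, count them per path letter with the $-\#AIV-\#VIA$ correction, and observe that joining tiles only adds adjacencies, so the union dominates $G$. The extra boundary-case analysis you anticipate for wall vertices is not needed once one accepts that the marked set dominates each picture in its entirety (wall vertices included), which is exactly what the paper's figure-based inspection asserts.
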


\begin{proof}
Each vertex lies on at least one picture. Thus, if $D \subseteq V(G)$ dominates all vertices in each picture, then D is a dominating set of G.
Inside each picture we have at least one path (by path we mean the top and the bottom path as in the definition of the signature of a tile).
We can see that the domination of $A$, $B$, $D$, and $V$ requires at least one vertex, while the domination of $H$ requires two vertices. The only exceptions are pictures $AIV$ and $VIA$, where the domination of the picture only requires one vertex and not two, which would be the result of the summation of domination numbers of paths $A$ and $V$. Figure~\ref{fig:pictures} shows all possible pictures with marked smallest dominating sets.


Edges between pictures only add edges between vertices and lower the domination number. This means that the domination number has an upper bound of the sum of domination numbers for individual paths.
\end{proof}

The upper bound from Theorem~\ref{thm:dom-upper} is sharp, which can be seen in the following two examples. They also show that the number of frames $L$ and $dL$ does not affect the upper bound.

\begin{example}
Let $G_1 = n \cdot VBdL$, where $n \geq 3$ is an odd number. Figure~\ref{fig:G1} shows the dominating set of size $2n$, meaning $\gamma(G_1) \leq 2n$. The formula from Theorem~\ref{thm:dom-upper} shows the same, as $\#A + \#B + \#D + \#V + 2 \cdot \#H - \#AIV - \#VIA = 2n$.


\begin{figure}[h]
    \centering
    \includegraphics[width=0.75\textwidth]{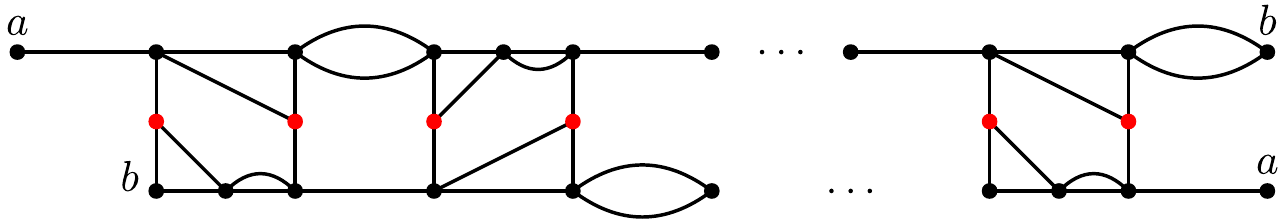}
    \caption{Graph $G_1$ with a marked dominating set of size $2n$. Note that to obtain the desired graph, vertices $a$ are identified, vertices $b$ are identified, and after this vertices of degree $2$ are suppressed. The same simplification of drawings is used for the rest of the paper.}
    \label{fig:G1}
\end{figure}

Assume $\gamma(G_1) < 2n$. The Pigeonhole principle says that there exists at least one picture, which is dominated by at most one vertex. Vertices in the corners of the picture can be dominated by vertices from neighboring pictures. The remaining three inner vertices, which we get from $B$ and $V$ and are painted orange in Figure~\ref{fig:G1_pictureVB}, are yet to be dominated. Since these three vertices cannot be dominated by one vertex, we need at least two vertices to dominate this picture, which leads to a contradiction. Therefore $\gamma(G_1) \geq 2n$.


\begin{figure}[h]
    \centering
    \includegraphics[width=0.095\textwidth]{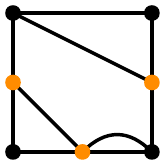}
    \caption{Picture $VB$, where we require that the inner vertices, marked orange, are dominated by one vertex.}
    \label{fig:G1_pictureVB}
\end{figure}

From this, it follows that $\gamma(G_1) = 2n$.
\end{example}

\begin{example}
Let $G_2 = n \cdot AIVL$, where $n \geq 3$ is an odd number. We can find a dominating set of size $n$ (see Figure~\ref{fig:G2}), thus $\gamma(G_2) \leq n$. This also follows from the formula in Theorem~\ref{thm:dom-upper}, as $\#A + \#B + \#D + \#V + 2 \cdot \#H - \#AIV - \#VIA = n$.


\begin{figure}[h]
    \centering
    \includegraphics[width=0.7\textwidth]{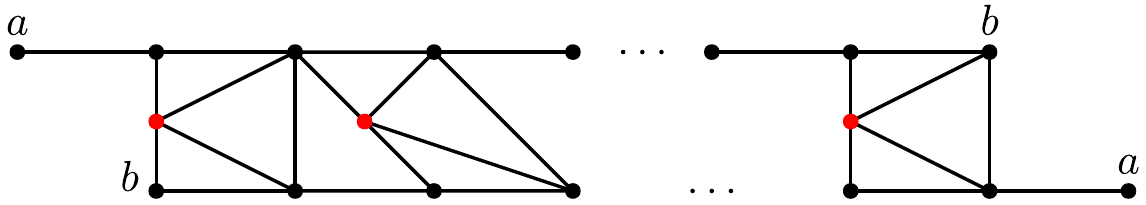}
    \caption{Graph $G_2$ with a marked dominating set of size $n$.}
    \label{fig:G2}
\end{figure}

We next show that $\gamma(G_2) \geq n$. Divide the graph $G_2$ into $n$ disjoint subgraphs, as shown in Figure~\ref{fig:G2_subgraphs}. Each subgraph is induced on the closed neighborhood of the degree $3$ vertex and is isomorphic to the paw graph. The position of degree $3$ vertices in $G_2$ ensures that the obtained $n$ subgraphs are all pairwise disjoint. We notice that the middle vertex of each subgraph (the vertex of degree $3$) can only be dominated by one of the vertices in the same subgraph. Hence we must choose at least one vertex from each one of the $n$ disjoint subgraphs, which means that $\gamma(G_2) \geq n$.

\begin{figure}[h]
    \centering
    \includegraphics[width=0.7\textwidth]{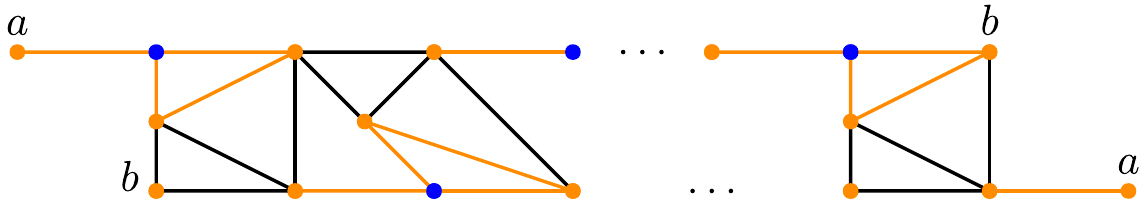}
    \caption{Graph $G_2$ with disjoint subgraphs marked orange and the middle vertex of each subgraph marked blue. Recall that when vertex $a$ is identified, the obtained vertex of degree $2$ is suppressed.}
    \label{fig:G2_subgraphs}
\end{figure}

It follows that $\gamma(G_2) = n$.

\end{example}

\subsection{Lower bound}
\label{sec:dom-lower}

\begin{theorem}
\label{thm:dom-lower}
If $G$ is a large $3$-con $2$-cc graph, then $$\gamma(G) \geq \left \lceil \frac{2}{3} \cdot \# L \right \rceil.$$
\end{theorem}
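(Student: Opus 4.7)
The plan is a simple counting argument. I would identify a witness set $W \subseteq V(G)$ of size $2 \cdot \#L$ with the property that every vertex $v \in V(G)$ satisfies $|N[v] \cap W| \leq 3$. Since any dominating set $D$ must cover all of $W$, summing this local bound over $D$ gives
$$|W| \;\leq\; \sum_{v \in D} |N[v] \cap W| \;\leq\; 3|D|,$$
so $|D| \geq |W|/3 = 2\#L/3$, and integrality yields $\gamma(G) \geq \lceil 2\#L/3 \rceil$.

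The natural candidate for $W$ is the set of \emph{wall vertices} of $G$, i.e., the vertices that appear on the left or right wall $\lambda, \rho$ of some tile $T_i$ in the decomposition $G = \circ((\otimes \mathcal{T})^{\updownarrow})$ from Definition~\ref{def:graphs}. Since every tile in $\mathcal{S}$ has $|\lambda| = |\rho| = 2$, each of the $\#L$ tiles contributes $4$ wall vertices when counted with multiplicity, for a total of $4\#L$. Each of the $\#L - 1$ joins identifies two pairs of walls (losing $2$ vertices from the distinct count per join), and the final cyclization identifies two more pairs (losing $2$ more). Thus $|W| = 4\#L - 2(\#L - 1) - 2 = 2\#L$, exactly as needed.

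The heart of the argument, and the principal obstacle, is verifying the local inequality $|N[v] \cap W| \leq 3$. Any vertex $v$ lies in at most two consecutive tiles of the decomposition — either interior to a single tile, or itself a wall vertex shared between two — so its neighbors are contained in at most two of the $42$ tile shapes from $\mathcal{S}$ built by combining the two frames in Figure~\ref{fig:frames} with the $13$ pictures in Figure~\ref{fig:pictures}. For each such tile, the wall-vertex neighbors of $v$ come either from frame edges on one side of the square or from the top or bottom path of the picture reaching across to the opposite wall. Enumerating the finite list of tile shapes and possible positions of $v$, one checks that the contributions from the (at most two) containing tiles sum to at most $3$; if the naive count fails for a few exceptional configurations, one can refine $W$ by keeping only one representative from each identified pair of walls, while maintaining $|W| \geq 2\#L$. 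Once this case check is complete, the displayed inequality above delivers the lower bound $\gamma(G) \geq \lceil 2\#L / 3 \rceil$.
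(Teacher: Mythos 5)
Your counting framework is sound as far as it goes: if a set $W$ with $|W| = 2\#L$ existed such that $|N[v] \cap W| \leq 3$ for every vertex $v$, then $|W| \leq \sum_{v \in D} |N[v] \cap W| \leq 3|D|$ would indeed give the bound. The gap is that the local inequality $|N[v] \cap W| \leq 3$ for $W$ the set of wall vertices is precisely the hard part, you do not verify it, and it is false in general. The paper's own treatment of the $DD$ picture (in both the lower-bound proof and the sharpness example $G_3 = n \cdot DDLDDLAIVL$) shows that a single vertex can be adjacent to \emph{both} vertices of a wall of a neighboring tile; a vertex sitting between two such tiles, or a wall vertex whose two incident pictures both have an unsubdivided top or bottom path (e.g.\ a picture with $P_b = \emptyset$), picks up its own wall partner plus wall vertices from both adjacent walls, and its closed neighborhood then meets $W$ in $4$ vertices. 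With the weaker local bound $4$ your argument only yields $\gamma(G) \geq \#L/2$. Your proposed fallback --- thinning $W$ to one representative per identified wall pair --- makes things worse, not better: it cuts $|W|$ to $\#L$ (one wall per tile survives the joins, each contributing one representative), and the parenthetical claim that one can do this ``while maintaining $|W| \geq 2\#L$'' is not coherent. There is also a secondary issue: the suppression of degree-$2$ vertices in Definition~\ref{def:graphs} can further perturb the count $|W| = 2\#L$, which you would need to control.

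The paper avoids a uniform local bound altogether by a two-phase discharging argument: each vertex of $D$ receives charge $\tfrac32$, distributes it to the tiles in which it dominates something (with carefully chosen fractions $1$, $\tfrac34$, $\tfrac12$, $\tfrac14$ depending on where the vertex sits), and a second redistribution phase handles exactly the exceptional $DD$ configuration in which one outside vertex covers an entire tile's near wall. Showing every tile ends with charge at least $1$ gives $\tfrac32 |D| \geq \#L$. Your double-counting idea is in the same spirit (it is a discharging argument with tiles replaced by wall vertices and all weights equal to $1$), but without a mechanism analogous to Phase~2 to absorb the exceptional configurations, the argument as written does not close.
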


Before proving the result, we list some useful observations.  Let $G$ be a large $3$-con $2$-cc graph.
\begin{itemize}
    \item Every vertex of $G$ lies in at least one picture.
    \item Every vertex of $G$ lies on at least one and at most two tiles.
    \item A vertex of $G$ can dominate vertices in at most three different tiles. If $v$ lies on an intersection of two tiles, then $v$ can dominate only vertices in these two tiles.
    \item If $G$ is dominated, then there are no two consecutive tiles that include no vertex from the dominating set.
    \item No picture can be completely dominated by a vertex in the corner of its frame (because no picture has a diagonal).
\end{itemize}

\begin{proof}[Proof of Theorem~\ref{thm:dom-lower}]
Let $D$ be the smallest dominating set of graph $G$. The idea of the proof is to give each vertex $v$ from $D$ some charge which is transferred to tiles in which $v$ dominates some vertices. The transfer is done so that no charge is lost, thus the initial charge equals the charge in the end, enabling us to double count it, first by vertices, then by tiles.

Every vertex from $D$ receives a charge of $\frac{3}{2}$. The charge transfers in two phases based on the following rules. 

\begin{description}
\item[Phase 1] Let $v \in D$ and say that $v$ lies on a tile $T$.
\begin{itemize}
    \item If $v$ lies only on tile $T$, then $v$ sends a charge of $1$ to $T$. If $v$ dominates vertices in two other tiles, then each of them receives a charge of $\frac{1}{4}$. If $v$ dominates vertices in only one other tile, then it receives a charge of $\frac{1}{2}$. If $v$ dominates only vertices in $T$, then $T$ receives an additional charge of $\frac{1}{2}$.
    \item If $v$ lies in the intersection of two tiles, then each of them receives a charge of $\frac{3}{4}$. 
\end{itemize}
The charge of a tile after Phase~1 is the sum of charges it received from different vertices.

\item[Phase 2] If a tile $T$ has a charge strictly smaller than $1$ and $T$ got only a charge of $\frac{1}{4}$ from vertices in a neighboring tile $T'$, then the tile $T'$ sends a charge of $\frac{1}{4}$ to the tile $T$.
\end{description}

We need to argue that Phase~2 is well-defined, i.e., that $T'$ has enough charge to send some away. At the same time, we prove that the charge of the tile that gives some charge to its neighbor is at least $1$ even after Phase~2. 

Let $T$ be a tile that has a charge strictly less than $1$ after Phase~1, let $T'$ be a neighboring tile, and suppose that $T$ got a charge of $\frac{1}{4}$ from vertices in $T'$. This means that only one vertex from $T'$ sent some charge to $T$ in Phase~1, say vertex $v$. Since tile $T$ is dominated and contains no vertex from $D$, the picture of $T$ can only be $DD$, and $v$ dominates both vertices on the wall of $T$ neighboring $T'$. Since $T'$ is also dominated and $v$ cannot dominate the vertex $w$ in the opposite corner of $T'$, either $T'$ contains another vertex from $D$ or $w$ is dominated by a vertex from a neighboring tile $T'' (\neq T)$. 

In the first case, the charge of $T'$ after Phase~1 is at least $1 + \frac{3}{4} = \frac{7}{4}$ ($v$ lies only on $T'$, but the other vertex may lie on the intersection of $T'$ and $T''$). This means that the charge of $T'$ after Phase~2 is at least $1$ (even if $T'$ sends a charge of $\frac{1}{4}$ to each one of its two neighbors). In the second case, the charge of $T'$ after Phase~1 is at least $1 + \frac{1}{4} = \frac{5}{4}$ (from the vertex $v$ and the vertex dominating $w$). Since in this case, $T''$ contains a vertex from $D$, its charge after Phase~1 is at least $1$, so in Phase~2 $T'$ sends a charge of $\frac{1}{4}$ only to $T$, thus the charge of $T'$ after Phase~2 is at least $1$.

Next, we prove the following.

\begin{claim}
After Phase~2, every tile has a charge of at least $1$.
\end{claim}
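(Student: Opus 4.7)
My plan is to prove the claim by case analysis on how the minimum dominating set $D$ meets the tile $T$. I distinguish four cases: (A) some $v \in D$ lies only on $T$; (B) $T$ contains at least two vertices of $D$, each on an intersection with a neighbor; (C) $T$ contains exactly one vertex of $D$, lying on an intersection; (D) $D \cap T = \emptyset$.

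In Case~(A), the Phase~1 ``lies only on tile'' rule sends at least $1$ from $v$ to $T$ directly. In Case~(B), the two intersection vertices together contribute $2 \cdot \tfrac{3}{4} = \tfrac{3}{2}$. Case~(D) follows the template of the ``Phase~2 is well-defined'' argument given just before the claim: the picture of $T$ must be $DD$, each of $T$'s two walls is dominated by a single external vertex contributing $\tfrac{1}{4}$ or $\tfrac{1}{2}$ to $T$ in Phase~1, and every $\tfrac{1}{4}$-contribution is matched by an additional $\tfrac{1}{4}$ from the same neighbor in Phase~2, so the total reaches at least $1$.

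Case~(C) is the key one. The intersection vertex $v \in D \cap T$ contributes $\tfrac{3}{4}$ to $T$, leaving a deficit of $\tfrac{1}{4}$. I would invoke the preliminary observation that no picture is completely dominated by a corner vertex: since $v$ lies in a corner of its frame, some $x \in T$ is not dominated by $v$. Interior vertices of $T$ have all their graph-neighbors inside $T$, and $v$ is the only $D$-vertex in $T$, so $x$ must be a corner of $T$ on the wall opposite $v$. Consequently $x$ is dominated by some $w \in D$ lying in the opposite neighbor $T''$ of $T$; when $w$ is interior to $T''$, the Phase~1 rules deliver $\tfrac{1}{4}$ or $\tfrac{1}{2}$ from $w$ to $T$, closing the deficit.

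The main obstacle I foresee is the remaining sub-case of~(C), where $w$ is itself an intersection vertex of $T''$ with a further tile and so transfers no charge directly to $T$ in Phase~1; a parallel degenerate configuration can also arise in Case~(D). To close these gaps I would either invoke the minimality of $D$ (arguing that the intersection vertex $v$, respectively one of the external dominators, could otherwise be relocated or removed without destroying domination, contradicting the choice of a smallest dominating set) or perform a short picture-by-picture check against the list of $42$ tiles in $\mathcal{S}$. Together with the bookkeeping in Phases~1 and~2, this yields charge at least $1$ on every tile.
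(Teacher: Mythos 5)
Your case split (A)--(D) is exactly the decomposition the paper's proof uses, and three quarters of it matches: case (A) is the paper's ``a vertex of $D$ lies only on $T$'' case, case (B) is subsumed by the same count $2\cdot\tfrac{3}{4}\geq 1$, and your treatment of case (D) --- picture $DD$, at least $\tfrac{1}{4}$ arriving from each neighbour in Phase~1, every lone $\tfrac{1}{4}$ topped up to $\tfrac{1}{2}$ in Phase~2 --- is verbatim the paper's argument. The paper disposes of case (C) in one line: $v$ is a corner, no picture is completely dominated by a corner (no diagonals), so a second vertex of $D$ must dominate part of $T$, and any such vertex deposits at least $\tfrac{1}{4}$ on $T$ in Phase~1, giving $\tfrac{3}{4}+\tfrac{1}{4}=1$. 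That deposit is indeed automatic when the second dominator lies in $T$ itself (it gives $\geq\tfrac{3}{4}$) or lies \emph{only} on the neighbouring tile (the Phase~1 rules then send $\tfrac{1}{4}$ or $\tfrac{1}{2}$ to $T$, since it dominates a vertex of $T$).

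The sub-case you isolate --- the second dominator $w$ sitting on the far wall of the neighbour $T''$, hence sending its whole charge to $T''$ and $T'''$ and nothing to $T$ --- is precisely the configuration that neither of those two observations covers, and your proposal does not actually close it. The two repairs you offer are not proofs as stated: minimality of $D$ does not by itself forbid such a configuration (deleting or relocating $w$ may destroy domination of $T''$ or $T'''$, so no contradiction with minimality is reached), and the picture-by-picture check over the $42$ tiles is only announced, never carried out. Note also that the preliminary observation you might hope to lean on --- an intersection vertex dominates only vertices of its two tiles --- does not kill this sub-case, because the undominated vertex $x$ lies on the wall $T\cap T''$ and is therefore a vertex of $T''$, well within $w$'s reach. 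Since the entire claim hinges on exactly this step (it is the only place where a tile could end up with charge $\tfrac{3}{4}$), the proposal as written has a genuine gap; to match the paper you need to either rule the configuration out or show it still forces an extra $\tfrac{1}{4}$ of charge onto $T$.
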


\begin{myproof}
Phase~2 does not reduce the charge of a tile below $1$, so tiles that have a charge at least $1$ after Phase~1 also have a sufficiently large charge after Phase~2.

If a tile includes a vertex from $D$ which does not lie on another tile, then its charge is at least $1$. If a tile $T$ includes a vertex $v$ from $D$ which lies on an intersection of two tiles, then $v$ lies in a corner, thus another vertex is needed to dominate the whole tile $T$. Hence the charge of $T$ after Phase~1 is at least $\frac{3}{4} + \frac{1}{4} = 1$. If a tile $T$ does not contain any vertex from $D$, then, since it is dominated, it must receive a charge of at least $\frac{1}{4}$ from vertices in each of its neighbors in Phase~1. If it receives only a charge of $\frac{1}{4}$ from one side, then in Phase~2 it receives another charge of  $\frac{1}{4}$ from this tile. Thus $T$ receives a charge of at least $\frac{1}{2}$ from each of its neighbors after both phases are over, resulting in the charge of $T$ being at least $1$. 
\end{myproof}

Since the initial charge equals the charge in the end, and $|D| = \gamma(G)$, we get
\begin{align*}
    \frac{3}{2} \cdot |D| & \geq  1 \cdot \# L \\
    \gamma(G) & \geq  \frac{2}{3} \cdot \# L
\end{align*}
As the domination number of $G$ is an integer, it follows that $\gamma(G) \geq \left \lceil \frac{2}{3} \cdot \# L \right \rceil$.
\end{proof}

The lower bound from Theorem~\ref{thm:dom-lower} is sharp, which can be seen in the following example.

\begin{example}
Let $G_3 = n \cdot DDLDDLAIVL$, where $n \geq 1$ is an odd number. Figure~\ref{fig:G3} shows the dominating set of size $\frac{2}{3} \cdot 3 \cdot n$, meaning $\gamma(G_3) \leq 2 n$. Our formula from Theorem~\ref{thm:dom-lower} shows the same, as $\left \lceil \frac{2}{3} \cdot \#L \right \rceil = \left \lceil \frac{2}{3} \cdot 3 \cdot n \right \rceil = 2n$.


\begin{figure}[h]
    \centering
    \includegraphics[width=\textwidth]{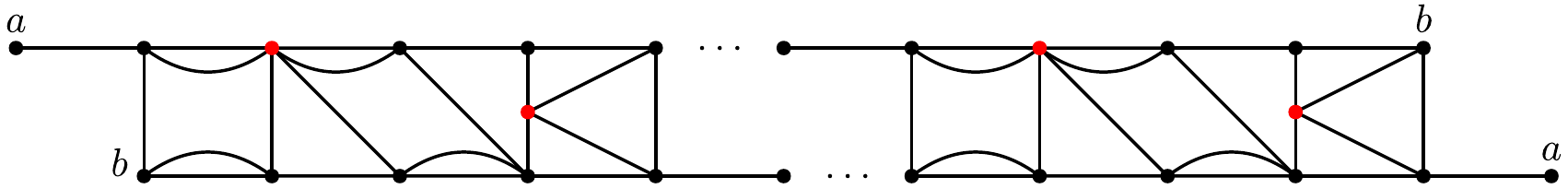}
    \caption{Graph $G_3$ with a marked dominating set of size $2n$.}
    \label{fig:G3}
\end{figure}

Assume $\gamma(G_3) < 2n$. Then there exists at least one trinity of consecutive pictures $DDLDDLAIVL$, which contains at most one vertex from the dominating set. No vertex of graph $G_3$ dominates all the inner vertices of the trinity, which are marked orange in Figure~\ref{fig:G3_trinity}, meaning we need at least two vertices to dominate this trinity of pictures. Therefore $\gamma(G_3) \geq 2n$.


\begin{figure}[h]
    \centering
    \includegraphics[width=0.35\textwidth]{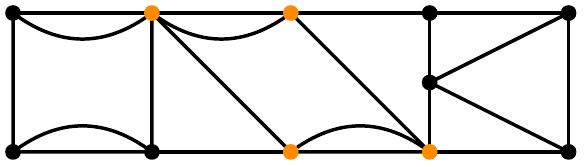}
    \caption{Trinity of consecutive pictures $DDLDDLAIVL$, where we want the inner vertices, marked orange, to be dominated by one vertex. Note that none of these vertices can be dominated by a vertex outside of this trinity of tiles.}
    \label{fig:G3_trinity}
\end{figure}

From this follows that $\gamma(G_3) = 2n$.
\end{example}

\section{Independence number}
\label{sec:independence}

In this section, we present the sharp upper and lower bounds for the independence number of large $3$-con $2$-cc graphs.

\subsection{Upper bound}
\label{sec:independence-upper}

\begin{theorem} \label{thm:indep-upper}
If $G$ is a large $3$-con $2$-cc graph, then
$$\alpha(G) \leq \left \lfloor{\frac{| \textsc{V}(G)|}{2}}\right \rfloor.$$
\end{theorem}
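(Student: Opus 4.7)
The plan is to reduce the statement to the Hamiltonicity of large $3$-con $2$-cc graphs, which has already been established in the literature cited in the introduction. Concretely, by~\cite{bokal+2021, ham}, every large $3$-con $2$-cc graph $G$ admits a Hamiltonian cycle; fix one such cycle $C$, which is a spanning subgraph of $G$ on $n := |V(G)|$ vertices.

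From there, the argument is a one-line application of monotonicity of the independence number under edge deletion. Any independent set $I \subseteq V(G)$ in $G$ contains no edges of $G$, hence a fortiori no edges of the spanning subgraph $C$, so $I$ is independent in $C$. Thus
\[
\alpha(G) \leq \alpha(C) = \alpha(C_n) = \left\lfloor \frac{n}{2} \right\rfloor,
\]
where the final equality is the standard fact that an independent set in an $n$-cycle cannot contain two consecutive vertices and hence has size at most $\lfloor n/2 \rfloor$.

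I expect essentially no obstacle: the whole content of the proof lies in the invocation of Hamiltonicity, which is already on record for exactly the class $\mathcal{T}(\mathcal{S})$ treated here. The only thing worth double-checking in the write-up is that the Hamiltonicity statements in~\cite{bokal+2021, ham} are stated for every member of $\mathcal{T}(\mathcal{S})$ (rather than a proper subfamily); if so, no case analysis over the $42$ tile types is needed, and in particular no use of the signature machinery from Section~\ref{sec:preliminaries} is required. If one preferred a self-contained proof, an alternative plan would be to exhibit a perfect matching (for $n$ even) or a near-perfect matching (for $n$ odd) directly on the tile structure, using the fact that each picture admits a local matching saturating all but at most one boundary vertex and that these local matchings can be consistently glued along the left/right walls; but since Hamiltonicity is already known, this backup route is unnecessary.
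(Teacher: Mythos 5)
Your proposal is correct and follows exactly the same route as the paper: invoke the known Hamiltonicity of large $3$-con $2$-cc graphs from~\cite{ham} and bound $\alpha(G)$ by the independence number of a spanning Hamiltonian cycle, which is $\lfloor |V(G)|/2 \rfloor$. Your write-up is in fact slightly more explicit than the paper's one-line proof about why monotonicity under passing to the spanning subgraph gives the bound.
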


\begin{proof}
Since all large $3$-con $2$-cc graphs are Hamiltonian~\cite{ham}, and the independence number of Hamiltonian graphs is at most $\frac{1}{2}|\textsc{V}(G)|$, we obtain the desired upper bound.
\end{proof}

The following example shows that the upper bound from Theorem~\ref{thm:indep-upper} is sharp.

\begin{example}
Let $G_4 = n \cdot HdL$, where $n \geq 3$ is an odd number. Then $| \textsc{V}(G_4)| = 6n$. 
Figure~\ref{fig:G4} shows that we can choose $3n$ independent vertices from the graph $G_4$, meaning $\alpha(G_4) \geq 3n$. 


\begin{figure}[h]
    \centering
    \includegraphics[width=0.77\textwidth]{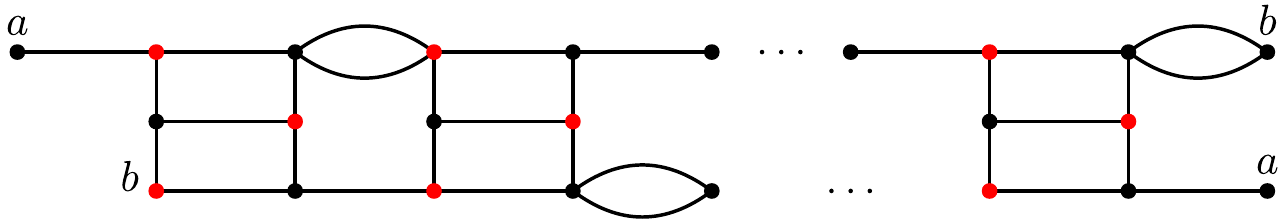}
    \caption{Graph $G_4$ with a marked independent set of size $3n$.}
    \label{fig:G4}
\end{figure}

Every vertex of graph $G_4$ lies in exactly one picture. Since we can choose at most three independent vertices in each of the $n$ pictures, $\alpha(G_4) \leq 3n$, which is also the result of Theorem~\ref{thm:indep-upper}. Therefore $\alpha(G_4) = 3n = \left \lfloor{\frac{| \textsc{V}(G_4)|}{2}}\right \rfloor$.
\end{example}

\subsection{Lower bound}
\label{sec:independence-lower}

\begin{theorem} \label{thm:indep-lower}
If $G$ is a large $3$-con $2$-cc graph, then
$$\alpha(G) \geq \min\{\#L + \#d, 2 \cdot \#L - 1\}.$$
\end{theorem}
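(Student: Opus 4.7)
The plan is to construct an explicit independent set of the required cardinality rather than to use a charging argument as in Theorem~\ref{thm:dom-lower}. The key structural fact is that the join operation identifies only the wall vertices of consecutive tiles, so any vertex lying in the strict interior of a picture (i.e.\ not on the left or right wall of its tile) is non-adjacent to every vertex of every other tile. This reduces the problem to a per-tile analysis, and the two quantities in the minimum correspond to two sources of interior vertices: one guaranteed by the picture and a second one guaranteed by the extra edge of a $dL$ frame.

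First, I would verify by inspection of the $13$ pictures in Figure~\ref{fig:pictures} that for every tile $T \in \mathcal{S}$ we can select an interior vertex $v_T$ of the picture of $T$ whose neighbors all lie in the picture of $T$ itself (equivalently, $v_T$ is incident to no edge reaching a wall vertex of $T$). Applying this to every tile of $G$ yields $\#L$ pairwise non-adjacent vertices, since interior vertices of distinct tiles cannot be adjacent in $\otimes\mathcal{T}$ nor after cyclization.

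Next, I would argue that in every tile whose frame is $dL$, the additional edge of the frame (after the picture has been inserted and the resulting degree-$2$ vertices have been suppressed, and after doubled edges have been replaced by single ones as permitted for $\alpha$) contributes a further interior vertex $u_T$ that is non-adjacent to $v_T$ and to all wall vertices of $T$. Adding one such $u_T$ for each of the $\#d$ tiles with frame $dL$ extends the previous independent set by $\#d$ more vertices, producing an independent set of size $\#L + \#d$. This already gives the bound whenever $\#L + \#d \le 2\#L - 1$, i.e.\ whenever $\#d \le \#L - 1$. For the remaining case $\#d = \#L$, I would handle the cyclization: when every frame is $dL$, the wrap-around identification of the right wall of $T_{2m}$ with the left wall of $T_0$ may force one of the chosen $u_T$ to become adjacent to a previously chosen vertex (or produce a parity obstruction around the cycle of tiles), and removing a single offending vertex still leaves $2\cdot \#L - 1$ independent vertices, matching the second term of the minimum.

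The main obstacle is the finite but delicate per-tile verification underlying Steps~1 and~2: one has to go through each of the $13$ pictures, each of their admissible orientations inside the frame, and each choice of frame ($L$ or $dL$), and confirm both the existence of the interior vertex $v_T$ and, when the frame is $dL$, of the second interior vertex $u_T$ with the required non-adjacency properties. Once this case analysis is complete, combining the two constructions and taking whichever yields more vertices gives $\alpha(G) \ge \min\{\#L+\#d,\ 2\cdot\#L - 1\}$ as claimed.
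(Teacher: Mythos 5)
There is a genuine gap, and it is located exactly in the per-tile verification that you defer to the end. Your Step~1 asserts that every picture in $\mathcal{S}$ contains an interior vertex $v_T$ none of whose neighbors is a wall vertex. This fails: the picture $DD$ (both the top and the bottom path are direct connections between the corners) has no interior vertices at all. This is visible in the paper's Example featuring $G_5$, built from $DDdL$ and $DDL$ tiles, where \emph{every} vertex of the graph lies on one of the $3$- or $4$-cycles formed at the junctions between consecutive tiles, i.e.\ every vertex is a wall vertex. For such graphs your construction produces an empty set in Step~1, while the theorem still demands $\#L+\#d$ independent vertices. Your Step~2 is also based on a misreading of where the $\#d$ term comes from: the difference between the $L$ and $dL$ frames manifests at the wall (after joining, an $L$ frame identifies two of the four junction vertices, yielding a $3$-cycle, while $dL$ leaves a $4$-cycle), so the $dL$ frame does not supply an additional \emph{interior} vertex $u_T$; it supplies an extra independent vertex \emph{on the wall}, because an independent set can take two vertices from a $4$-cycle but only one from a $3$-cycle.

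The paper's proof runs in the opposite direction from yours: instead of looking for vertices deep inside the pictures, it \emph{deletes} the pictures, passing to the frame skeleton $G'$ (which only removes vertices and identifies no new adjacencies, so $\alpha(G)\ge\alpha(G')$), and then selects one vertex from each junction $3$-cycle and two from each junction $4$-cycle, with a separate argument (costing one vertex) when every frame is $dL$ and the cyclic selection wraps around. If you want to salvage your approach, you would have to combine interior vertices (where they exist) with wall vertices, at which point you are essentially forced into the paper's junction-cycle analysis anyway; the interior vertices alone cannot certify the bound.
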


\begin{proof}
For every large $3$-con $2$-cc graph $G$ we can construct a graph $G'$ from the same frames used for $G$, without using the pictures.
We notice that if we add pictures into the frames in $G'$ to get the original graph $G$, we only add vertices and do not connect any vertices that were previously not connected, thus any picture we add can only increase the independence number,  therefore $\alpha(G) \geq \alpha(G')$. Note that the graph $G'$ will have the same number of frames $L$ and $dL$ as the initial graph $G$. Therefore it suffices to prove the proposed lower bound for the graph $G'$.

We separate two cases, the first case is if there are only $dL$ frames and the second if there is at least one $L$ frame.

\begin{description}
\item[Case 1]
If there are only $dL$ frames, we can find $2 \cdot \#L - 1$ independent vertices, as is shown in Figure~\ref{fig:G'_case1}. Note that in this case $\min\{\#L + \#d, 2 \cdot \#L - 1\} = 2 \cdot \#L - 1$.



\begin{figure}[h]
    \centering
    \includegraphics[width=0.8\textwidth]{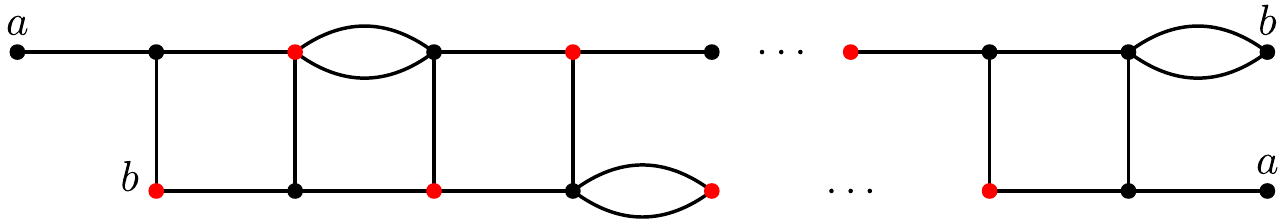}
    \caption{Graph $G'$ from Case~1 with a marked independent set of size $2 \cdot \#L - 1$.}
    \label{fig:G'_case1}
\end{figure}

\item[Case 2]
If there is at least one $L$ frame, then we can choose the independent set based on the following method. Note that double edges can be ignored when studying the independence number. The graph $G'$ is then composed of $3$- and $4$-cycles, which are connected with additional edges (marked orange in Figure~\ref{fig:G'_case2}). These additional edges come from where the top and bottom paths of the pictures were in $G$. To obtain an independent set of appropriate size, we select one vertex from each $3$-cycle and two vertices from each $4$-cycle. For every $3$-cycle, we select the vertex of degree $3$ on its left side. If we have two consecutive $3$-cycles, the vertices we chose from them are independent. When selecting vertices in the $4$-cycles, we consider all consecutive $4$-cycles between two $3$-cycles and select vertices for the independent set in these $4$-cycles from right to left. The $3$-cycle on the right of the consecutive $4$-cycles determines how we choose the independent set in the right-most $4$-cycle, which in turn uniquely determines how we select two independent vertices in each of these $4$-cycles (in the same manner as in Figure~\ref{fig:G'_case1}). Notice that the $3$-cycle on the left of these $4$-cycles gives no restriction on the selected vertices.


\begin{figure}[h]
    \centering
    \includegraphics[width=0.8\textwidth]{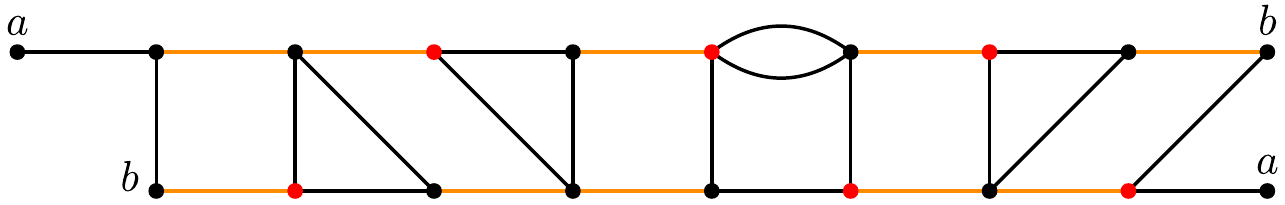}
    \caption{An example of the graph $G'$ from Case~2 with a marked independent set of size $\#L + \#d$. The edges that connect the $3$- and $4$-cycles are marked orange.}
    \label{fig:G'_case2}
\end{figure}



We have thus chosen two vertices in each $4$-cycle and one vertex in each $3$-cycle. Since the number of $3$-cycles is $\#L - \#d$ and the number of $4$-cycles is $\#d$, we have found an independent set of size $(\#L - \#d) + 2 \cdot \#d = \# L + \# d $. Note that in this case $\min\{\#L + \#d, 2 \cdot \#L - 1\} = \# L + \# d$. \qedhere
\end{description}
\end{proof}

The following two examples show that the lower bound from Theorem~\ref{thm:indep-lower} is sharp. The first example naturally follows from the proof of Theorem~\ref{thm:indep-lower}, while the second example provides a non-trivial family of sharpness examples. Additionally, examples are selected in such a way that different parts of the minimum are attained.


\begin{example}
Let $G_5$ be a large $3$-con $2$-cc graph built from tiles $DDdL$ and $DDL$, so that not all of the tiles are $DDdL$.

From Theorem~\ref{thm:indep-lower} we know that $\alpha(G_5) \geq \#L + \#d$. 
Similarly as in the proof, we can find $\#d$ $4$-cycles and $\#L-\#d$ $3$-cycles in $G_5$, so that every vertex lies on exactly one of them. Every $4$-cycle is formed by the two vertices on the right of a $DDdL$ tile and the two vertices on the left of the next tile to the right. Every $3$-cycle is formed by the two vertices on the right of a $DDL$ tile and the two vertices on the left of the next tile. Two of those vertices are identified, thus giving us a $3$-cycle. The $3$-cycles and $4$-cycles are marked in Figure~\ref{fig:G5}. 



\begin{figure}[h!]
    \centering
    \includegraphics[width=\textwidth]{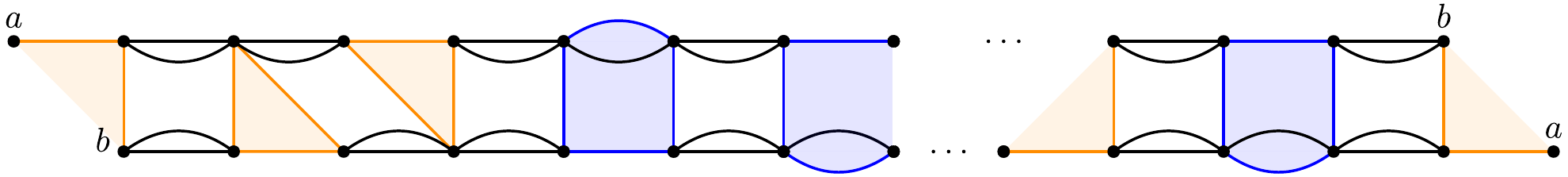}
    \caption{Graph $G_5$ with marked $3$-cycles and $4$-cycles.}
    \label{fig:G5}
\end{figure}

We can choose at most one independent vertex from every $3$-cycle and at most two independent vertices from every $4$-cycle, therefore $\alpha(G_5) \leq 2 \cdot \#d + 1 \cdot (\#L - \#d) = \#L + \#d$.

From this it follows that $\alpha(G_5) = \#L + \#d$.
\end{example}

\begin{example}
Let $G_6$ be a large $3$-con $2$-cc graph that is built from $DDdL$, $VIAdL$, and $AIVdL$ tiles, but not all tiles are $VIAdL$, and not all tiles are $AIVdL$.

From Theorem~\ref{thm:indep-lower} we know that $\alpha(G_6) \geq 2 \cdot \#L - 1$. We can find at most two independent vertices in each of the tiles $DDdL$, $VIAdL$, and $AIVdL$, therefore we can find at most $2 \cdot \#L$ independent vertices in $G_6$.

For contradiction suppose that $\alpha(G_6) \neq 2 \cdot \#L - 1$, meaning $\alpha(G_6) = 2 \cdot \#L$. We try to construct an independent set $A$ with $2 \cdot \#L$ vertices. Set $A$ must include exactly two vertices from every tile because otherwise set $A$ would have to include at least $3$ vertices from one tile, which is impossible. 

There are two different ways in which we can choose two independent vertices from a $DDdL$ tile, and three different ways for tiles $VIAdL$ and $AIVdL$. All options are shown in Figure~\ref{fig:G6_tiles}.


\begin{figure}[h!]
    \centering
    \includegraphics[width=\textwidth]{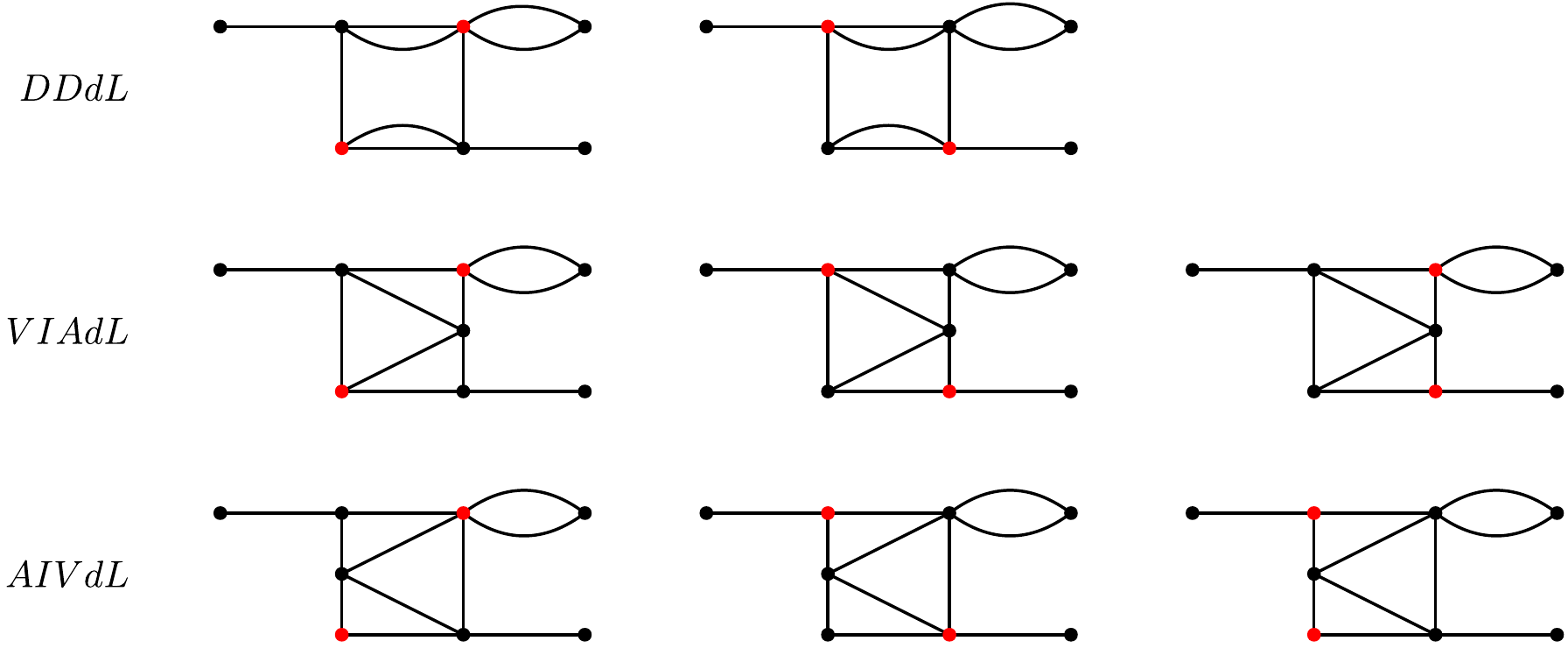}
    \caption{Tiles $DDdL$, $VIAdL$, and $AIVdL$ with two independent vertices marked.}
    \label{fig:G6_tiles}
\end{figure}

Even though tiles $VIAdL$ and $AIVdL$ have a third option for the choice of two independent vertices (where the selected vertices are not diagonal), we can't choose the vertices in set $A$ in this way, since we know that we have to choose two independent vertices from every tile. If we choose the top and bottom right vertex in a $VIAdL$ tile, then the only way to choose two vertices in the next tile is if that tile is also a $VIAdL$ tile and we choose the top and bottom right vertices. We continue this for all tiles, but since not all tiles are $VIAdL$, at some point we are not able to choose two independent vertices in the next tile. For the same reason, we also cannot choose the two vertices on the left of an $AIVdL$ tile.

This means that for all tiles, the two vertices that are included in set $A$ are the diagonal ones, without loss of generality we can assume that those diagonal vertices in the first tile are the bottom left and the top-right vertex. This choice determines which vertices we must choose in the tile to the right and so on, as is shown in Figure~\ref{fig:G6}.


\begin{figure}[h]
    \centering
    \includegraphics[width=0.8\textwidth]{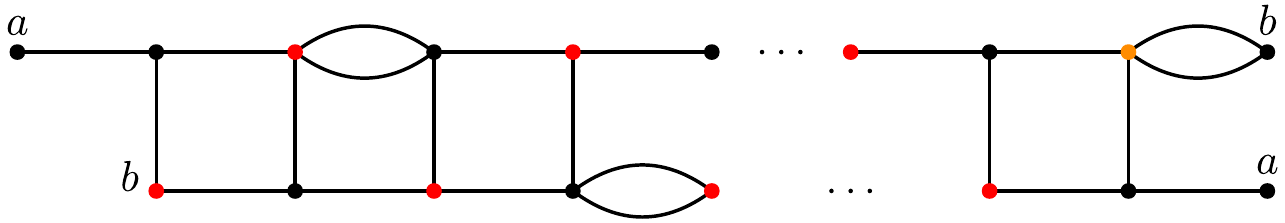}
    \caption{Graph $G'_6$ was constructed from the same frames used for $G_6$, without using the pictures. The first tile of graph $G'_6$ determines which two vertices are included in set $A$ for all other tiles. When we get to the last tile, we get a contradiction (marked orange).}
    \label{fig:G6}
\end{figure}

When we get to the last tile we get a contradiction. Because of the tile to the left, the only possible vertices from the last tile that can be included in $A$ are the bottom left and the top-right vertex. But the top-right vertex is connected to a vertex in the first tile that is already included in set $A$, therefore set $A$ cannot include two vertices from the last tile. 

This means that the independent set $A$ that has $2 \cdot \#L$ elements cannot exist and $\alpha(G_6) = 2 \cdot \#L - 1$.
\end{example}





\section*{Acknowledgements}
The authors were introduced to the structure of $2$-crossing-critical graphs in a workshop at the University of Ljubljana, organized by prof.\ dr.\ Drago Bokal. We thank him for several illuminating conversations and ideas. We would also like to thank Sandi Klavžar, Alen Vegi Kalamar, and Simon Brezovnik for co-organizing the workshop.

V.I.\ was supported by a postdoctoral fellowship at the Simon Fraser University (Canada) and by the Slovenian Research Agency (research core funding P1-0297 and projects J1-2452, J1-1693, N1-0095, N1-0218).


\end{document}